\newtheorem{thr}{Theorem}[section]
\newtheorem{lem}[thr]{Lemma}
\newtheorem{observ}[thr]{Observation}
\theoremstyle{definition}
\newtheorem{con}[thr]{Conjecture}
\theoremstyle{remark}
\numberwithin{equation}{section}
\def\K{{\mathrm K}}
\def\S{{\mathcal S}}
\def\F{{\mathbb F}}
\def\R{\mathbb{R}}
\begin{document}

\title[Sign patterns of rational matrices with large rank]{Sign patterns of rational \\ matrices with large rank}

\author{Yaroslav Shitov}
\address{National Research University Higher School of Economics, 20 Myasnitskaya Ulitsa, Moscow 101000, Russia}
\email{yaroslav-shitov@yandex.ru}

\subjclass[2000]{15A03, 15B35}
\keywords{Matrix theory, sign pattern, term rank}

\begin{abstract}
Let $A$ be a real matrix. The term rank of $A$ is the smallest number $t$ of lines
(that is, rows or columns) needed to cover all the nonzero entries of $A$. We prove
a conjecture of Li et al. stating that, if the rank of $A$ exceeds $t-3$, there is
a rational matrix with the same sign pattern and rank as those of $A$. We point out
a connection of the problem discussed with the Kapranov rank function of tropical
matrices, and we show that the statement fails to hold in general if the rank of $A$
does not exceed $t-3$.
\end{abstract}

\maketitle

\section{Introduction}

The problem of constructing a matrix over a given ordered field with specified sign pattern and rank
deserved a significant amount of attention in recent publications, see~\cite{LiEtAl} and references therein.
The present paper establishes a connection of this problem with that of computing certain rank functions
arisen from tropical geometry. We prove the conjecture on sign patterns of rational matrices formulated
in~\cite{LiEtAl}, and we present the examples showing the optimality of our result.

\section{Preliminaries}

The following notation is used throughout our paper. By $U^{m\times n}$ we denote the set of all $m$-by-$n$
matrices with entries from a set $U$, by $A_{ij}\in U$ we denote an entry of a matrix $A\in U^{m\times n}$.
By $U_{(i)}$ we denote the $i$th row of $U$, and we call a \textit{line} of a matrix any of its columns or rows.

A field $R$ is called \textit{ordered} if, for some subset $P\subset R$ closed under addition and multiplication,
the sets $P$, $-P$, and $\{0\}$ form a partition of $R$. The elements of $P$ are then called \textit{positive}, and
those from $-P$ \textit{negative}. The \textit{sign pattern} of a matrix $A\in R^{m\times n}$ is the matrix $S=\S(A)\in\{+,-,0\}^{m\times n}$
defined as $S_{ij}=+$ if $A_{ij}$ is positive, $S_{ij}=-$ if $A_{ij}$ is negative, and $S_{ij}=0$ if $A_{ij}=0$.
The \textit{minimum rank} of a sign pattern $S$ with respect to $R$ is the minimum of the ranks of matrices $B$ over $R$
satisfying $\S(B)=S$.

There are a significant number of recent publications devoted to the study of the minimal ranks of sign patterns
(see~\cite{LiEtAl} and references therein), and our paper aims to prove a conjecture formulated in~\cite{LiEtAl}.
This conjecture relates the minimal rank of a pattern with a concept of the \textit{term rank} of a matrix, which is defined
as the smallest number of lines needed to include all the nonzero elements of that matrix.
The classical \textit{K\"{o}nig's theorem} states the the term rank of a matrix $A$ equals the maximum number of nonzero
entries of $A$ no two of which belong to the same line, so the term rank of a sign pattern $S$ can be thought
of as the maximum of the ranks of matrices $C$ over $R$ satisfying $\S(C)=S$. Now we can formulate the conjecture
by Li et al.\,\,relating the concepts of minimum and term ranks for sign pattern matrices.

\begin{con}\cite[Conjecture 4.2]{LiEtAl}\label{conLiEtAl}
Assume $S$ is a sign pattern matrix with term rank equal to $t$, and let $r$ be the minimum rank of $S$ over the reals.
If $r\geqslant t-2$, then the minimum rank of $S$ over the rationals is $r$ as well.
\end{con}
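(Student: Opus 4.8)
The plan is to fix a real matrix $A$ with $\S(A)=S$, term rank $t$ and $\operatorname{rank}A=r\geqslant t-2$, and to manufacture a rational matrix with the same pattern and rank. Since $r$ is the \emph{minimum} rank of $S$, every real---hence every rational---matrix with pattern $S$ already has rank at least $r$; so it suffices to find a rational $B$ with $\S(B)=S$ and $\operatorname{rank}B\leqslant r$. First I would extract the structural content of K\"onig's theorem: a minimum cover of the nonzero entries by $a$ rows and $b$ columns, $a+b=t$, brings $A$ (after permuting lines) to the block form $\left(\begin{smallmatrix}A_{11}&A_{12}\\A_{21}&0\end{smallmatrix}\right)$ with an all-zero lower-right block. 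Splitting the row space of $A$ into the contributions $V_{\mathrm t}$ and $V_{\mathrm b}$ of the top and bottom blocks gives $t-r=(a-\operatorname{rank}[A_{11}\ A_{12}])+(b-\operatorname{rank}A_{21})+\dim(V_{\mathrm t}\cap V_{\mathrm b})$, a sum of three nonnegative integers; the hypothesis $r\geqslant t-2$ forces this sum to be at most $2$, so each summand is small.

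The second step is a rational parametrization of the locus $\operatorname{rank}\leqslant r$. Pick rows $I$ and columns $J$ with $|I|=|J|=r$ and $A[I,J]$ invertible. Every matrix of rank at most $r$ whose $(I,J)$-block remains invertible is recovered from its \emph{cross}---the entries lying in the rows of $I$ or the columns of $J$---through the identity $A=A[\,:\,,J]\,(A[I,J])^{-1}A[I,\,:\,]$, so this locus is an affine chart depending freely and rationally on the cross. I would then replace the cross by nearby rational values: the signs prescribed on the cross survive because they are an open condition, and, by continuity of the displayed identity, so do the correct nonzero signs of all \emph{determined} entries, namely those in rows outside $I$ and columns outside $J$. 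The single thing continuity cannot guarantee is a determined entry that must vanish: each such position imposes the exact equation $A[i,J](A[I,J])^{-1}A[I,j]=0$ on the cross parameters.

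Thus the problem becomes: move the cross to rational values while preserving exactly these vanishing relations---equivalently, find a rational point, inside the open sign region, on the intersection of the affine chart with the coordinate subspace cut out by the determined zeros. Here I would use $t-r\leqslant 2$. Because the three coranks above are small, all the genuinely nonlinear relations are confined to a bounded subconfiguration of the cross; fixing the remaining cross entries at rational approximations should collapse the task to satisfying at most two determinantal equations in a handful of free variables. As a determinant is affine in each single entry, one can then solve for rational values that respect the prescribed signs, treating the equations one variable at a time. I would set this up as the base case of an induction that first discards zero lines, proportional lines, and zeros already lying on the cross, thereby reducing the general pattern to such a bounded core.

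The main obstacle is precisely this last step: the exact zeros of the determined block are unreachable by naive approximation, so one must prove that the low-defect determinantal configuration carries a rational point with the required signs. I expect the decisive case to be $t-r=2$, where two independent vanishing conditions must be met simultaneously and the freedom left in the cross is tightest; controlling the interaction of these two conditions---and checking that the signs forced on the determined nonzeros are compatible with a rational solution---is the delicate point. This is also where I would expect the bound to be sharp: the same mechanism should translate, via the Kapranov rank of an associated tropical matrix, into the counterexamples promised for $r<t-2$, the obstruction to rational solvability appearing exactly once three independent determinantal conditions are in play.
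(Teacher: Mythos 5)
Your reduction to finding a rational $B$ with $\S(B)=S$ and $\operatorname{rank}B\leqslant r$ is sound (the paper makes the same move, raising the rank back to $r$ one entry at a time), your K\"onig decomposition and the identity $t-r=(a-\operatorname{rank}[A_{11}\ A_{12}])+(b-\operatorname{rank}A_{21})+\dim(V_{\mathrm t}\cap V_{\mathrm b})$ are correct, and the cross-parametrization of the rank-$\leqslant r$ locus is a legitimate chart. But there is a genuine gap exactly where you flag ``the main obstacle'': you never produce the rational point satisfying the vanishing conditions, and the heuristic offered for why this should be easy is quantitatively wrong. The determined entries that must vanish are not ``at most two'': the entire lower-right block of the K\"onig form is zero, so up to $(n-a)(m-b)$ positions $(i,j)$ each impose their own equation $A[i,J]\,(A[I,J])^{-1}A[I,j]=0$ on the cross. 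The bound $t-r\leqslant 2$ controls three rank defects, not the number of forced zeros, so the problem does not collapse to ``two determinantal equations in a handful of free variables'' solvable one variable at a time. Without an argument that this large polynomial system has a rational solution inside the prescribed open sign region, the proof is incomplete --- and you say as much yourself (``I expect'', ``should collapse'', ``the delicate point'').

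The paper resolves precisely this point by a linearization your outline lacks. In the critical case $r=t-2$ one may assume the off-diagonal blocks $C$ and $D$ have rank defect exactly one (otherwise a separate, easier lemma applies to one block alone). A defect of one means the rows of $C$ satisfy a single linear dependence; after rescaling rows --- harmless for sign patterns --- the dependence has coefficients $0$ and $1$, so the column space of $C$ is a \emph{rational} subspace in which rational points are dense, and $C$ (similarly $D$) can be made rational while preserving signs. Once $C$ and $D$ are rational, the set of admissible upper-left blocks $W$ for which $\left(\begin{smallmatrix}W&C\\D&O\end{smallmatrix}\right)$ has rank $t-2$ is a rational affine subspace (the paper's block lemma): all of your many vanishing conditions become a single linear condition with rational coefficients, and density of rational points finishes the argument. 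To salvage your chart-based approach you would need an analogous statement that the forced-zero locus within your affine chart is rational; as written, the decisive step is asserted rather than proved.
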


In Section~3 we develop a combinatorial technique which allows to prove Conjecture~\ref{conLiEtAl}. In Section~4
we establish the connection of the problem discussed with the Kapranov rank function of Boolean matrices introduced
in~\cite{DSS}. We also make the use of matroid theory to prove the optimality of the bound in Conjecture~\ref{conLiEtAl}
by showing that its statement fails to hold in general if $r$ is less than $t-2$.

\section{Proof of the result}

We start with two easy observations helpful for further considerations.

\begin{observ}\label{observsign2}
Multiplying a row of a real matrix $A$ by a nonzero number
will not change the minimal ranks of its sign pattern.
\end{observ}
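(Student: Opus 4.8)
The plan is to reduce the statement to the effect of negating a single row of the sign pattern and then to exhibit an explicit rank-preserving bijection between the matrices realizing the two patterns. Write $A'$ for the matrix obtained from $A$ by multiplying its $i$th row by a nonzero scalar $c$. If $c$ is positive, then multiplication by $c$ preserves the sign of every entry, so $\S(A')=\S(A)$ and there is nothing to prove; hence I may assume $c$ is negative, in which case $\S(A')$ is exactly the pattern $S'$ obtained from $S:=\S(A)$ by flipping the symbols $+\leftrightarrow-$ throughout row $i$ and leaving every $0$ in place.

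First I would fix an arbitrary ordered field $R$ and compare the two sets of realizations, $\mathcal{M}=\{B\in R^{m\times n}:\S(B)=S\}$ and $\mathcal{M}'=\{B\in R^{m\times n}:\S(B)=S'\}$. The map sending $B$ to the matrix $\tilde B$ obtained by multiplying its $i$th row by $-1$ carries $\mathcal{M}$ into $\mathcal{M}'$: negation in an ordered field interchanges the positive cone $P$ with $-P$ and fixes $0$, so it flips precisely the signs in row $i$, which is exactly the passage from $S$ to $S'$. Applying the same map again recovers $B$, so it is a bijection between $\mathcal{M}$ and $\mathcal{M}'$.

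Next I would note that this bijection preserves rank, since multiplying one row of a matrix by $-1$ amounts to left multiplication by a diagonal matrix whose diagonal entries are all $1$ except for a single $-1$, an invertible transformation. Consequently $\operatorname{rank}\tilde B=\operatorname{rank}B$, and the sets of ranks attained on $\mathcal{M}$ and on $\mathcal{M}'$ coincide; taking minima shows that the minimum rank of $S$ over $R$ equals that of $S'$ over $R$. Since $R$ was an arbitrary ordered field, the conclusion holds simultaneously for the reals and for the rationals, which is the form in which the later arguments will invoke it.

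The argument presents no genuine obstacle; the only points requiring a little care are to separate the harmless positive-scalar case from the sign-flipping negative one, and to verify that negation in a general ordered field permutes the three symbols $+,-,0$ in the expected way rather than merely over $\R$.
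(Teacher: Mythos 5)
Your proof is correct and is just the careful writing-out of the argument the paper dismisses as ``Trivial'': positive scalars leave the pattern unchanged, and negative scalars induce a rank-preserving bijection (row negation) between the realizations of the two patterns over any ordered field. No issues.
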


\begin{proof}
Trivial.
\end{proof}

\begin{observ}\label{observsign1}
Let $r$ and $t$ be, respectively, the minimum and maximum ranks of a sign pattern $S$ with respect to an ordered field $R$.
Then, for any integer $h\in[r,t]$, there is a matrix over $R$ which has rank $h$ and sign pattern $S$.
\end{observ}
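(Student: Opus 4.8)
The plan is to connect a minimum-rank realization of $S$ to a maximum-rank one through a chain of matrices, all having pattern $S$, along which the rank changes by at most one per step; a discrete intermediate-value argument then supplies a realization of every intermediate rank. Note that the obvious continuous homotopy $(1-\lambda)B+\lambda C$ between two such realizations is useless here: its rank equals $t$ for all but finitely many $\lambda\in R$ and so jumps straight past the values strictly between $r$ and $t$. This forces a genuinely discrete construction, which has the added benefit of being purely algebraic and hence valid over an arbitrary ordered field $R$, not merely over the reals. First I would fix matrices $B$ and $C$ over $R$ with $\S(B)=\S(C)=S$ and ranks $r$ and $t$ respectively; these exist because the rank of a matrix of pattern $S$ ranges over a finite nonempty set of nonnegative integers, so its minimum $r$ and its maximum $t$ are attained.

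The key observation is that, since $B$ and $C$ share the sign pattern $S$, they have the same nonzero positions, and in each such position their entries carry the same sign. List the nonzero positions of $S$ as $p_1,\dots,p_N$, and for $0\le k\le N$ let $M^{(k)}$ be the matrix that agrees with $C$ in the positions $p_1,\dots,p_k$, agrees with $B$ in $p_{k+1},\dots,p_N$, and is zero elsewhere. Then $M^{(0)}=B$ and $M^{(N)}=C$, and every $M^{(k)}$ has sign pattern exactly $S$: in a zero position both $B$ and $C$ vanish, while in a nonzero position $p$ the entry of $M^{(k)}$ equals either $B_p$ or $C_p$, both of which carry the sign prescribed by $S$.

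Next I would bound the rank jump along the chain. The matrices $M^{(k-1)}$ and $M^{(k)}$ differ only in the single entry $p_k$, so their difference is a scalar multiple of a single matrix unit and therefore has rank at most one. By subadditivity of the rank over any field we get $\operatorname{rank}(M^{(k)})\le\operatorname{rank}(M^{(k-1)})+1$ and, symmetrically, $\operatorname{rank}(M^{(k-1)})\le\operatorname{rank}(M^{(k)})+1$, whence $\abs{\operatorname{rank}(M^{(k)})-\operatorname{rank}(M^{(k-1)})}\le 1$. Thus $\operatorname{rank}(M^{(0)}),\dots,\operatorname{rank}(M^{(N)})$ is a sequence of integers starting at $r$, ending at $t$, with consecutive terms differing by at most one.

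Finally, a discrete intermediate-value argument closes the proof: given an integer $h\in[r,t]$, let $k$ be the least index with $\operatorname{rank}(M^{(k)})\ge h$; then $\operatorname{rank}(M^{(k-1)})\le h-1$ and the step bound force $\operatorname{rank}(M^{(k)})=h$, while $\S(M^{(k)})=S$ by the construction above. I expect the only real subtlety to be the bookkeeping of the chain together with the verification that each intermediate matrix retains the pattern $S$; the rank estimate itself is immediate from subadditivity, and—crucially—the entire argument is algebraic, so it goes through verbatim over an arbitrary ordered field rather than relying on completeness of the reals.
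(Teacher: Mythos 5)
Your proposal is correct and is essentially the paper's own argument written out in full: the paper's one-line proof ("changing a single entry produces a matrix whose rank differs by at most $1$") is exactly your entry-by-entry chain from a minimum-rank to a maximum-rank realization combined with the discrete intermediate-value step. (Only trivial nitpick: when $h=r$ your "least index $k$" is $0$ and $M^{(k-1)}$ does not exist, but then $\operatorname{rank}(M^{(0)})=r=h$ directly, so nothing breaks.)
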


\begin{proof}
Changing a single entry produces a matrix whose rank differs by at most $1$ from that of the initial matrix.
\end{proof}

The following lemma gives a useful description of the rank of a block
matrix. We say that a linear subspace $S\subset\R^d$ is \textit{rational}
if $S$ has a basis consisting of vectors that have rational coordinates
only.

\begin{lem}\label{lemsignblock}
Let $V_1\in\mathbb{Q}^{p\times(p-1)}$ and $V_2\in\mathbb{Q}^{(q-1)\times q}$
be rational matrices that have ranks $p-1$ and $q-1$, respectively.
Then the set $\mathcal{W}$ of all $W\in\R^{p\times q}$ for which the matrix
$U=\left(\begin{smallmatrix} W & V_1 \\ V_2 & O \end{smallmatrix}\right)$
has rank $p+q-2$ is a rational subspace.
\end{lem}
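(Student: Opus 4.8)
The plan is to compute the rank of $U$ explicitly as a function of $W$ by exploiting the two full-rank blocks $V_1,V_2$, and then to recognise the condition $\operatorname{rank}U=p+q-2$ as the vanishing of a single rational linear functional of $W$, whose kernel is the desired rational subspace.

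First I would isolate the last $p-1$ columns of $U$, namely the columns of $\left(\begin{smallmatrix} V_1 \\ O\end{smallmatrix}\right)$. Since $V_1$ has rank $p-1$ these are linearly independent, so they span a $(p-1)$-dimensional rational subspace $L\subset\R^{p+q-1}$ lying entirely in the coordinate subspace $\R^p\oplus\{0\}^{q-1}$ cut out by the top $p$ rows. Writing $C=\left(\begin{smallmatrix} W \\ V_2\end{smallmatrix}\right)$ for the first $q$ columns, the column space of $U$ is $L+\operatorname{col}(C)$, whence
$$\operatorname{rank}U=(p-1)+\dim\pi\bigl(\operatorname{col}(C)\bigr),$$
where $\pi$ denotes the quotient map $\R^{p+q-1}\to\R^{p+q-1}/L$.

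Next I would make $\pi$ concrete. Because $V_1$ is rational of rank $p-1$, its left null space is a rational line, so I fix a nonzero rational vector $a\in\mathbb{Q}^{p}$ with $a^{T}V_1=0$; then $\operatorname{col}(V_1)=\ker(a^{T})$ and $L=\ker(a^{T})\oplus\{0\}^{q-1}$. The quotient $\R^{p+q-1}/L$ is thereby identified with $\R^{q}$ via $(x,y)\mapsto(a^{T}x,\,y)$ for $x\in\R^{p}$, $y\in\R^{q-1}$, and under this identification the image of $C$ is the $q\times q$ matrix
$$M(W)=\begin{pmatrix} a^{T}W \\ V_2\end{pmatrix},$$
whose top row is linear in $W$ and whose remaining rows are the constant block $V_2$. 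Hence $\operatorname{rank}U=(p-1)+\operatorname{rank}M(W)$.

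Finally, since $V_2$ has rank $q-1$ the square matrix $M(W)$ always has rank $q-1$ or $q$, so $\operatorname{rank}U=p+q-2$ holds if and only if $\det M(W)=0$. Expanding $\det M(W)$ along its first row exhibits it as a homogeneous linear functional of $W$ with rational coefficients, the cofactors being rational minors of $V_2$ and $a$ being rational. Its kernel is exactly $\mathcal{W}$, hence a rational subspace; in fact, as $V_2$ has rank $q-1$ not all those minors vanish, so the form is not identically zero and $\mathcal{W}$ is a rational hyperplane. The step I expect to carry the weight is the first reduction: quotienting by the rational subspace $L$ and verifying that it is realised by the rational functional $a^{T}$, so that $M(W)$ stays rational in its constant part and linear in $W$. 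Once the identity $\operatorname{rank}U=(p-1)+\operatorname{rank}M(W)$ is in hand, the fullness of $V_2$ collapses the rank condition to the vanishing of a single rational linear form, and rationality of $\mathcal{W}$ is immediate.
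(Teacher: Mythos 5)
Your proof is correct and complete, but it takes a more computational route than the paper. The paper argues by normalization: it observes that rational elementary operations on the first $p$ rows (resp.\ first $q$ columns) of $U$ replace $\mathcal{W}$ by its image under a rational linear bijection $W\mapsto EW$ (resp.\ $W\mapsto WF$), so one may assume $V_1=\left(\begin{smallmatrix}I_{p-1}\\0\end{smallmatrix}\right)$ and $V_2=(I_{q-1}\ 0)$, in which case the identity blocks clear out all of $W$ except the $(p,q)$ entry and the condition becomes $W_{pq}=0$. You instead quotient by the rational column space of $\left(\begin{smallmatrix}V_1\\O\end{smallmatrix}\right)$, realize the quotient map by a rational left null vector $a$ of $V_1$, and reduce the rank condition to $\det\left(\begin{smallmatrix}a^TW\\V_2\end{smallmatrix}\right)=0$, i.e.\ to the vanishing of the explicit rational linear form $\sum_{i,j}(-1)^{1+j}a_i\det(V_2^{(j)})W_{ij}$. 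The two arguments rest on the same underlying fact --- the blocks $V_1,V_2$ already account for $p+q-2$ of the at most $p+q-1$ dimensions, so the rank condition is a single rational linear equation in $W$ --- but yours has two advantages: it makes explicit the step the paper leaves unjustified (why the elementary transformations ``cannot break'' rationality of $\mathcal{W}$), and it yields the sharper conclusion that $\mathcal{W}$ is in fact a rational hyperplane, with its defining equation written down. The paper's version is shorter and avoids any determinant computation. Both are valid proofs of the lemma.
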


\begin{proof}
Note that rational elementary transformations on the first $p$ rows or first $q$ columns
of $U$ can not break the property of $\mathcal{W}$ to be a rational subspace. So we can
assume that $V_1$ and $V_2$ differ from the identity matrices by adding the zero column
and row, which case is easy.
\end{proof}


Now we are ready to prove Conjecture~\ref{conLiEtAl} in a special case.

%
%
%

\begin{lem}\label{lemsign2}
For any real $m$-by-$n$ matrix $A$ of rank $n-2$, there is a rational $m$-by-$n$ matrix which has rank $n-2$ and sign pattern equal to that of $A$.
\end{lem}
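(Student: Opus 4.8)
The plan is to present the desired matrices as the rational points of a suitable subspace. Put $S=\S(A)$ and let $Z\subset\R^{m\times n}$ be the coordinate subspace of all matrices whose zero entries are exactly those prescribed by $S$; it is defined over $\mathbb{Q}$ and contains $A$. Inside $Z$, the matrices whose remaining entries carry the signs prescribed by $S$ form a relatively open region $\Omega\ni A$. Since $\operatorname{rank}\ge n-2$ is an open condition that holds at $A$, it suffices to exhibit a rational subspace $\mathcal{L}\subseteq Z$ with $A\in\mathcal{L}$ and $\operatorname{rank}B\le n-2$ for every $B\in\mathcal{L}$: then $\mathcal{L}\cap\Omega$ is a nonempty open subset of the rational subspace $\mathcal{L}$, so it contains a rational matrix, and restricting to the open set where $\operatorname{rank}\ge n-2$ makes that matrix have rank exactly $n-2$ and sign pattern $S$.

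Before building $\mathcal{L}$ I would normalize $A$. Using row and column permutations (which merely permute $S$) together with the row rescalings permitted by Observation~\ref{observsign2}, I would arrange that the first $n-2$ columns of $A$ are linearly independent and that the leading $(n-2)\times(n-2)$ minor does not vanish. Writing $F$ for the block of the first $n-2$ columns, so that $\operatorname{rank}F=n-2$, the last two columns are $Fg_1$ and $Fg_2$ for uniquely determined $g_1,g_2\in\R^{\,n-2}$, because $A$ has rank $n-2$.

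To build $\mathcal{L}$ I would use Lemma~\ref{lemsignblock} to linearize the rank constraint. The idea is to keep all sign-bearing entries of $A$ inside the free block $W$ of a block matrix $\bigl(\begin{smallmatrix}W&V_1\\ V_2&O\end{smallmatrix}\bigr)$ and to choose the auxiliary blocks $V_1,V_2$ over $\mathbb{Q}$, with the ranks demanded by the lemma. These auxiliary entries are unconstrained by $S$, so picking them rational is legitimate. The blocks $V_1,V_2$ must be selected, depending on $A$, so that the block matrix attains the rank $p+q-2$ of the lemma precisely when the matrix recovered from $W$ has rank at most $n-2$, and so that the real $A$ itself realizes this. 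Lemma~\ref{lemsignblock} then yields a rational subspace of admissible $W$; intersecting it with the coordinate subspace $Z$ produces $\mathcal{L}$, and $A\in\mathcal{L}$ by the choice just made.

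The step I expect to be hardest is this encoding. Lemma~\ref{lemsignblock} lowers the rank by exactly one, while a rank-$(n-2)$ matrix with $n$ columns is rank-deficient by two, so the two column dependencies $g_1,g_2$ must both be accounted for. I would absorb one of them into the rational data $V_1,V_2$ and apply Lemma~\ref{lemsignblock} to the residual corank-one situation, iterating the construction for the second dependency. Entangled with this is the feasibility requirement that $\mathcal{L}$ meet $\Omega$, that is, that the \emph{real} matrix $A$ lie on $\mathcal{L}$; this is what dictates the $A$-dependent choice of $V_1,V_2$ and is the crux on which the whole argument rests. Once it is in place, the density of rational points in $\mathcal{L}$ recorded in the first paragraph completes the proof.
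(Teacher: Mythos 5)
There is a genuine gap, and it sits exactly where you locate ``the crux on which the whole argument rests.'' Your reduction asks for a rational subspace $\mathcal{L}\ni A$ consisting entirely of matrices of rank at most $n-2$; the density argument that follows is fine, but such an $\mathcal{L}$ need not exist, so the reduction is a dead end rather than merely a hard step. The rank bound forces the matrices of $\mathcal{L}$ to share column dependencies with $A$, and for $A$ these are the real vectors $g_1,g_2$, which are typically irrational. Concretely, take $m=n=3$, so $n-2=1$, and $A=ab^{T}$ with all entries positive and neither $a$ nor $b$ proportional to a rational vector: every linear space of rank-$\le 1$ matrices containing $A$ is of the form $a\,V^{T}$, $V\,b^{T}$, or the line $\R\, ab^{T}$, and none of these contains a nonzero rational matrix, hence none is a rational subspace. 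The lemma is nonetheless true for this $A$ (the all-ones matrix realizes the pattern), which shows your sufficient condition is strictly stronger than the statement and cannot be the route. The attempted repair via Lemma~\ref{lemsignblock} does not close the gap either: the corank-one condition of that lemma amounts to the single rational linear equation $u^{T}Wv=0$ (with $u$ spanning the orthogonal complement of the column space of $V_1$ and $v$ spanning the kernel of $V_2$), which is nowhere near the codimension of the rank-$\le n-2$ locus, and ``absorbing'' the real dependency $g_1$ into the rational blocks $V_1,V_2$ is exactly the impossibility above restated.

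The paper avoids this by not demanding that $A$ itself lie on any rational low-rank subspace. It takes a rank-two $B\in\R^{n\times 2}$ with $AB=0$, normalizes one column of $B$ to $\{0,1\}$ entries using the scalings permitted by Observation~\ref{observsign2}, replaces $B$ by the rational truncation $C=C(N)$ with $C_{jk}=[NB_{jk}]/N$, and solves the rational linear systems $X_{(i)}C=(0\ 0)$ row by row with the free variables pinned to rational approximations of the corresponding entries of $A$. This yields rational matrices $X(N)$ satisfying $X(N)C=0$, hence of rank at most $n-2$, with $X(N)\to A$; for large $N$ the sign pattern and the lower bound $\operatorname{rank}\ge n-2$ are then inherited from $A$ by continuity. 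In other words, the rational subspace $\{X: XC(N)=0\}$ varies with $N$ and only approaches $A$ in the limit; that approximation step is the idea your proposal is missing.
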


\begin{proof}
By the assumptions, there is a rank-two matrix $B\in\R^{n\times2}$ for which the matrix $AB$ is zero. Observation~\ref{observsign2}
allows one to assume that the first column of $B$ consists of zeros and ones. Let $X$ be a matrix whose $(i,j)$th
entry is a variable if $A_{ij}\neq0$ and $X_{ij}=0$ otherwise.

For a sufficiently large integer $N>0$, we set $C_{jk}=[NB_{jk}]/N$. Note that, for every row index $i$,
the matrix formed by the rows of $B$ with indexes $j$ satisfying $A_{ij}\neq 0$ has the same rank as
the matrix formed by the rows of $C$ with the same indexes. For every $i$, we assign to every free variable $X_{ig}$ of the
linear system $X_{(i)}C=(0\,0)$ the value $[NA_{ig}]/N$. Solving those systems, we get as a solution a rational
matrix $X=X(N)$ which satisfies $XC=0$. Since $X(N)\rightarrow A$ as $N\rightarrow\infty$, the matrices $X(N)$ and $A$
have the same sign pattern for sufficiently large $N$.
%
%
%
\end{proof}

Now let us prove the key result of the section.

\begin{thr}\label{thrmaintermrealis0}
Let $A$ be a real matrix with term rank equal to $t$. If the rank of $A$ equals $t-2$, then there is
a rational matrix which has rank $t-2$ and the same sign pattern as $A$.
\end{thr}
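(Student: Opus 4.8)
The plan is to combine the block structure furnished by König's theorem with the two preparatory lemmas, using a rounding argument in the spirit of Lemma~\ref{lemsign2} to keep the sign pattern fixed. First I would take a minimum line cover of $A$ of size $t$ and permute rows and columns so that the cover consists of the first $a$ rows and the first $b$ columns, where $a+b=t$. Since every nonzero entry lies on a cover line, this exhibits $A$ in the form $\left(\begin{smallmatrix} A_{11} & A_{12} \\ A_{21} & O\end{smallmatrix}\right)$, with $A_{12}$ occupying the first $a$ rows and the last $n-b$ columns and $A_{21}$ the last $m-a$ rows and the first $b$ columns. Fixing a maximum matching of size $t$ and using the minimality of the cover, I would argue that each cover line carries exactly one matching entry and that an entry matched on a cover row cannot also lie on a cover column; hence the $a$ entries matched on the cover rows sit inside $A_{12}$ and the $b$ entries matched on the cover columns sit inside $A_{21}$. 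This shows $A_{12}$ has term rank $a$ and $A_{21}$ has term rank $b$, which is what will allow me to realize the off-diagonal blocks by rational matrices of the ranks needed below.

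Next I would exploit the rank hypothesis. From the inequality $\operatorname{rank}(A)\ge\operatorname{rank}(A_{12})+\operatorname{rank}(A_{21})$ and $\operatorname{rank}(A)=a+b-2$, the two off-diagonal blocks must lose a combined total of two from their term ranks $a$ and $b$, and I would split into cases according to how this deficiency is distributed. If it falls entirely on one side, the situation is that of a single block whose rank is two below its width, and I would reduce it to Lemma~\ref{lemsign2}. In the balanced case I would put $A$ into the shape of Lemma~\ref{lemsignblock} with $p=a$ and $q=b$: using the size-$a$ matching inside $A_{12}$ I select a rational, full-column-rank block $V_1$ of rank $a-1$, symmetrically a rational full-row-rank block $V_2$ of rank $b-1$ from the size-$b$ matching in $A_{21}$, and let $A_{11}$ play the role of the free block $W$. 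The identity $t-2=p+q-2$ is then exactly the rank condition of Lemma~\ref{lemsignblock}, so the admissible values of $W$ form a rational subspace $\mathcal{W}$, while the entries not used in the matchings, together with the redundant rows and columns, lie in the spans already fixed and enter as prescribed rational-linear constraints; Observation~\ref{observsign2} lets me normalize individual rows wherever this simplifies the bookkeeping.

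To hold the sign pattern fixed I would then mimic the limiting argument of Lemma~\ref{lemsign2}: replace the fixed rational data $V_1,V_2$ by $N$-truncations, assign to the free coordinates of $W$ the truncations $[NA_{ij}]/N$ of the corresponding entries of $A$, and solve the resulting rational-linear system inside $\mathcal{W}$. Because the ranks of all the chosen submatrices are unchanged for large $N$, the systems remain solvable, and their solutions $X(N)$ converge to $A$ as $N\to\infty$; for $N$ large enough $X(N)$ is then a rational matrix of rank $t-2$ with $\S(X(N))=\S(A)$. The step I expect to be the main obstacle is reducing the general rectangular matrix, with its possibly many redundant rows and columns, to the square configuration of Lemma~\ref{lemsignblock} without disturbing a single sign: straightforward elimination would wreck the sign pattern, so the redundant lines have to be absorbed purely through the rational-linear constraints and the convergence $X(N)\to A$. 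Making this compatible with the case analysis of the previous paragraph — and checking that the term-rank hypothesis, rather than a crude count of rows or columns, is what guarantees the off-diagonal matchings exist — is the delicate heart of the argument.
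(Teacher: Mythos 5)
Your skeleton is the paper's: the same K\"onig decomposition $\left(\begin{smallmatrix} B & C \\ D & O\end{smallmatrix}\right)$ with $p+q=t$, the same inequality $\operatorname{rank}A\geq\operatorname{rank}C+\operatorname{rank}D$ forcing either an unbalanced case (one off-diagonal block of rank two below its term rank, dispatched by Lemma~\ref{lemsign2}) or the balanced case $\operatorname{rank}C=p-1$, $\operatorname{rank}D=q-1$, which is then fed into Lemma~\ref{lemsignblock}. Up to notation ($C,D$ versus your $A_{12},A_{21}$) the case analysis and the roles of the two lemmas coincide with the paper's.

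The one genuine gap is exactly the step you flag as the ``delicate heart'': making the off-diagonal blocks rational without disturbing their signs or their ranks. Your plan --- truncate a selected full-column-rank $p\times(p-1)$ subblock $V_1$ of $A_{12}$ entrywise and let the remaining columns ``enter as prescribed rational-linear constraints'' --- stumbles on the zero entries of the non-basis columns: a column $c_j$ of $A_{12}$ lies in the span of the \emph{original} $V_1$, not of the truncated one, and re-expressing it in the perturbed span produces entries that are merely close to zero where $c_j$ has exact zeros; restoring those zeros means solving further linear systems whose solvability and stability under truncation you would still have to establish, and the free/dependent split for $W$ needs the same stability check. The paper sidesteps all of this with a move you mention only as bookkeeping: since $A_{12}$ has $p$ rows and rank $p-1$, its left kernel is spanned by a single vector, and Observation~\ref{observsign2} lets one rescale the rows of $A$ so that this vector is rational. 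The column space of $A_{12}$ is then a \emph{fixed} rational hyperplane that already contains its actual columns; rational points are dense in every rational subspace (including the sub-subspaces cut out by the support constraints), so $A_{12}$ and $A_{21}$ can be replaced by nearby rational matrices of the same rank and sign pattern, after which Lemma~\ref{lemsignblock} hands you a fixed rational subspace $\mathcal{W}$ containing $A_{11}$ in which to approximate $A_{11}$ rationally. Identifying that specific use of Observation~\ref{observsign2} is the missing idea; with it, your truncation machinery becomes unnecessary in the balanced case.
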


\begin{proof}
1. Up to row and column permutations, $A$ is an $n$-by-$m$ matrix of the form $\left(\begin{smallmatrix} B & C \\ D & O \end{smallmatrix}\right)$,
where the matrix $B\in\R^{p\times q}$ satisfies $p+q=t$, and $O$ is the zero matrix. If the rank of $D$ is less than $q-1$, then by Lemma~\ref{lemsign2}
we can construct a rational matrix $D'$ of rank $q-2$ with the same sign pattern as $D$. Choosing $B'$ and $C'$ as arbitrary matrices with
sign patterns equal to those of $B$ and $C$, we get that the rank of $\left(\begin{smallmatrix} B' & C' \\ D' & O \end{smallmatrix}\right)$ is
at most $t-2$, and we are done. We can assume in what follows that $D$ has rank at least $q-1$ and, similarly, that $C$ has rank at least $p-1$.
Since the rank of $A$ is $t-2=p+q-2$, we conclude that the rank of $D$ exactly equals $q-1$ and the rank of $C$ is $p-1$.

2. By Step~1, the rows of $C$ are linearly dependent, and we can assume by Observation~\ref{observsign2} that the coefficients of this
linear dependence are rational. In other words, the columns of $C$ generate a rational subspace in $\R^p$. Since rational points are
dense in rational subspaces, we can assume that the matrix $C$ (and the matrix $D$, similarly) consists of rational numbers, in which case
the result follows from Lemma~\ref{lemsignblock}.
\end{proof}

Now we are ready to prove Conjecture~\ref{conLiEtAl}.

\begin{thr}\label{thrmaintermrealis}
Let $A$ be a real matrix with term rank equal to $t$. If the rank of $A$ is at least $t-2$, then there is
a rational matrix which has the same sign pattern and rank as those of $A$.
\end{thr}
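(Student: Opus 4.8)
The plan is to derive the full statement from the special case $\operatorname{rank}A=t-2$ settled in Theorem~\ref{thrmaintermrealis0}, using Observation~\ref{observsign1} to interpolate ranks. Put $S=\S(A)$ and $r=\operatorname{rank}A$, so $r\in\{t-2,t-1,t\}$, and let $\mu$ be the minimum rank of $S$ over $\R$, whence $\mu\leqslant r\leqslant t$. As noted after K\"onig's theorem, the term rank equals the maximum rank of $S$ over any ordered field, so the maximum rank of $S$ over $\mathbb{Q}$ is again $t$. Thus, by Observation~\ref{observsign1} applied over $\mathbb{Q}$, it is enough to produce one rational matrix with sign pattern $S$ and rank at most $r$: this shows the minimum rank of $S$ over $\mathbb{Q}$ is at most $r\leqslant t$, and Observation~\ref{observsign1} then delivers a rational matrix of rank exactly $r$ with sign pattern $S$.

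First I would clear the easy cases. If $r=t$, every rational matrix with sign pattern $S$ already has rank at most $t=r$, so such a matrix exists trivially. If $\mu\leqslant t-2$, then Observation~\ref{observsign1} over $\R$ gives a real matrix of rank exactly $t-2$ with sign pattern $S$, and Theorem~\ref{thrmaintermrealis0} turns it into a rational matrix of rank $t-2\leqslant r$ with the same sign pattern. In either situation the reduction above finishes the proof, so the only genuinely open case is $\mu=r=t-1$.

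To handle $\mu=t-1$ I would imitate the proof of Theorem~\ref{thrmaintermrealis0}. Up to permutations $A=\left(\begin{smallmatrix} B & C \\ D & O\end{smallmatrix}\right)$ with $B\in\R^{p\times q}$ and $p+q=t$, and the inequalities $\operatorname{rank}A\leqslant p+\operatorname{rank}D$ and $\operatorname{rank}A\leqslant q+\operatorname{rank}C$ force $\operatorname{rank}C\geqslant p-1$ and $\operatorname{rank}D\geqslant q-1$. Exactly as in Step~2 of that theorem, a corank-one block has its row (or column) space equal to a rational hyperplane once its unique linear dependence is made rational via Observation~\ref{observsign2}, so density of the rationals lets me replace it by a rational block of the same rank and sign pattern; a full-rank block can instead be perturbed to a nearby rational matrix. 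Having made $C$ and $D$ rational, I select $B$: if one of them has full rank the total rank is $t-1$ for every choice of $B$, and any rational $B$ with the right signs works; if both have corank one, Lemma~\ref{lemsignblock} presents the set of $B$ yielding rank $t-2$ as a proper rational subspace $\mathcal{W}$, and I take a rational $B$ with the sign pattern of $B$ outside $\mathcal{W}$, which exists because the sign conditions carve out a nonempty open set not contained in a proper subspace, thereby forcing rank exactly $t-1$.

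The hard part will be precisely this case $\mu=t-1$: one cannot simply perturb $A$ to a rational matrix of equal rank, since rank $t-1$ is not an open condition and a generic perturbation leaps to the term rank $t$; nor does Theorem~\ref{thrmaintermrealis0}, which is tailored to rank $t-2$, apply directly. The only technical point in the adaptation is to check that discarding the redundant columns of $C$ and rows of $D$ (which alters neither the rank of the whole matrix nor the subspace $\mathcal{W}$) brings $C$ and $D$ to the shapes required by Lemma~\ref{lemsignblock}, so that the rational $B$ can indeed be located within the prescribed sign box.
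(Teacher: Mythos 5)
Your proof is correct, but it reaches the theorem by a genuinely different reduction than the paper. The paper's own proof is a two-line padding trick: adding a repeated row preserves the rank and changes the term rank by at most one, so one pads $A$ with repeated rows until the term rank exceeds the rank by exactly two, applies Theorem~\ref{thrmaintermrealis0} to the padded matrix, deletes the added rows, and finishes with Observation~\ref{observsign1} exactly as you do. You instead split on the minimum real rank $\mu$ of the pattern: the cases $r=t$ and $\mu\leqslant t-2$ reduce to Theorem~\ref{thrmaintermrealis0} via Observation~\ref{observsign1}, and the one remaining case $\mu=r=t-1$ is handled by re-running the K\"onig decomposition and the corank-one analysis of that theorem, now choosing the block $B$ \emph{outside} the rational subspace $\mathcal{W}$ of Lemma~\ref{lemsignblock} rather than inside it. Both routes work, and yours, though longer, is arguably more robust: the paper's assertion that repeated rows eventually raise the term rank to $r+2$ fails when there are too few nonzero columns and rows (e.g.\ for an all-positive $2\times2$ pattern, whose term rank can never exceed $2$), whereas your case analysis never needs to raise the term rank. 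Two points in your hard case deserve one extra line each: (i) the claim that a full-rank block forces total rank $t-1$ for every choice of $B$ uses that the \emph{other} block then has corank exactly one, which holds because two full-rank complementary blocks would force $\operatorname{rank}A=t$; (ii) the sign box of $B$ is open only inside the coordinate subspace $\{W:W_{ij}=0\text{ whenever }B_{ij}=0\}$, which could a priori lie entirely in $\mathcal{W}$, so the clean way to rule this out is to invoke $\mu=t-1$: if every real $W$ with the sign pattern of $B$ lay in $\mathcal{W}$, assembling it with the rationalized $C$ and $D$ would produce a real matrix of pattern $S$ and rank at most $t-2$, a contradiction.
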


\begin{proof}
Note that adding a repeating row does not affect the rank of a matrix, and the term rank of the matrix obtained
is either equal to or greater by one than that of the initial matrix. Therefore, adding a sufficient number of
repeating rows to $A$, we get a matrix $A'$ satisfying the assumptions of Theorem~\ref{thrmaintermrealis0}.
So we can find a rational matrix $B$ which has the same sign pattern as that of $A$ and rank not exceeding the rank of $A$.
Now the result follows from Observation~\ref{observsign1}.
\end{proof}

\section{Optimality of the result}

To construct sign patterns of term rank $t$ realizable by real matrices of rank $t-3$ but
not by rational matrices of that rank, we need to recall the definition of another rank concept.
For $\F$ a field, define the \textit{Kapranov rank} of a matrix $B\in\{0,1\}^{m\times n}$
with respect to $\F$ as the smallest possible rank of a matrix $C\in\F^{m\times n}$ satisfying
$C_{ij}=0$ if and only if $B_{ij}=0$. The following lemma points out a connection between the
quantity introduced (which we denote by $\K_\F(B)$ in what follows) and the problem of pattern realisability.

\begin{lem}\label{lemKapreal}
Assume $R_1$ is an ordered field, and a matrix $B\in\{0,1\}^{m\times n}$
satisfies $r=\K_{R_1}(B)<\K_{R_2}(B)$ for any field $R_2$ strictly contained in $R_1$.
Then there is a sign pattern $S\in\{0,+,-\}^{m\times n}$ realizable by a matrix over $R_1$
of rank $r$ but not by a matrix over $R_2$ of that rank.
\end{lem}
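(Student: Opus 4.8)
The plan is to let $S$ be the sign pattern of a matrix that witnesses the Kapranov rank of $B$ over $R_1$. Concretely, I would first fix a matrix $C\in R_1^{m\times n}$ of rank $r$ satisfying $C_{ij}=0$ if and only if $B_{ij}=0$; such a $C$ exists precisely because $\K_{R_1}(B)=r$. Setting $S=\S(C)$, I observe that an entry of $C$ vanishes exactly when its sign is $0$, so the zero/nonzero structure of $S$ coincides with the $0/1$ pattern of $B$. In particular $C$ itself realizes $S$ over $R_1$ and has rank $r$, which already establishes the positive half of the claim.

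For the negative half I would argue by contradiction. Suppose some proper subfield $R_2\subsetneq R_1$ admitted a matrix $C'\in R_2^{m\times n}$ with $\S(C')=S$ and rank $r$. Since the positivity cone of $R_1$ restricts to a positivity cone on $R_2$, the field $R_2$ is itself ordered and sign patterns over $R_2$ are meaningful, $\S(C')$ being computed with respect to this induced order. From $\S(C')=S$ I would get that $C'_{ij}=0$ if and only if $S_{ij}=0$, which by the previous paragraph happens if and only if $B_{ij}=0$. Hence $C'$ satisfies the support condition in the definition of the Kapranov rank of $B$, so that $\K_{R_2}(B)\leqslant\mathrm{rank}\,C'=r$. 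This contradicts the hypothesis $\K_{R_2}(B)>r$, so no such $C'$ exists, and $S$ is the desired pattern.

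I do not expect a genuine obstacle here, as the argument is essentially an unwinding of definitions. The one point requiring care is the reconciliation of the two notions of pattern: the Kapranov rank records only the zero/nonzero structure, whereas a sign pattern additionally carries signs, so I must make sure that the passage from $C$ to $S$ and back to a hypothetical $C'$ preserves the zero set exactly. A secondary bookkeeping remark, worth stating explicitly, is that every subfield of the ordered field $R_1$ inherits an order from the restricted positivity cone, so the hypothesis $\K_{R_2}(B)>r$—phrased for arbitrary fields—indeed applies to the ordered subfields that arise when realizing $S$.
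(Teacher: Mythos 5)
Your argument is correct and is exactly the paper's proof: take a matrix $C$ over $R_1$ witnessing $\K_{R_1}(B)=r$, let $S$ be its sign pattern, and observe that any rank-$r$ realization of $S$ over a proper subfield $R_2$ would have the same zero set as $B$ and hence force $\K_{R_2}(B)\leqslant r$, a contradiction. The paper states this more tersely ("one can see that..."), while you spell out the unwinding of definitions and the inherited order on $R_2$; the substance is identical.
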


\begin{proof}
By definition of Kapranov rank, there is a matrix $A\in{R_1}^{m\times n}$ which has rank $r$
and satisfies $A_{ij}=0$ if and only if $B_{ij}=0$. Denoting the sign pattern of $A$ by $S$,
one can see that $S$ is not realizable by a matrix over $R_2$ of rank $r$.
\end{proof}

Now we see that a sign pattern realizable over $R_1$ but not over $R_2$
always exists if we have a zero-one matrix whose Kapranov rank over $R_2$
is greater than that over $R_1$. It turns out that producing zero-one matrices
with this property can be performed by the use of matroid theory, and let us
recall the basic definitions of this theory~\cite{Ox}. A \textit{matroid} $M$ on a finite
set $E$ is defined by the set $\mathcal{B}\subset 2^E$ of its \textit{bases},
which are supposed to satisfy the following conditions: (1) $\mathcal{B}\neq\varnothing$;
(2) if $A,B\in \mathcal{B}$ and $a\in A\setminus B$, then there is a $b\in B\setminus A$
such that $A\setminus\{a\}\cup\{b\}\in I$. All the bases can easily be shown to have
the same cardinality, and this cardinality is called the \textit{rank} of a matroid $M$.
A \textit{circuit} in $M$ is a minimal set which is a subset of no $B\in\mathcal{B}$.
A \textit{dual matroid} $M^*$ has as its bases the complements of the bases of $M$, and
a circuit in $M^*$ is called a \textit{cocircuit} for $M$.
The matroid $M$ is \textit{representable} over a field $\F$ if we can assign vectors from $\F^d$
to the elements of $E$ in such a way that a set $B$ is a basis of the linear span of $E$ if and only
if $B\in\mathcal{B}$. Finally, define a cocircuit matrix $\mathcal{C}=\mathcal{C}_M$ of $M$ as a matrix with
rows indexed by elements of $E$ and columns indexed by cocircuits such that $\mathcal{C}_{ij}=1$ if $i$ belongs
to the $j$th cocircuit and $\mathcal{C}_{ij}=0$ otherwise. The following theorem allows one to construct
matrices whose Kapranov rank depends on a ground field.

\begin{thr}\cite[Proposition~7.2 and Theorem~7.3]{DSS}\label{thrKapMatr}
If $M$ is a matroid of rank $r$ and $\mathcal{C}$ its cocircuit matrix, then $\K_\F(\mathcal{C})\geqslant r$ for any field $\F$.
For $\F$ infinite, the condition $\K_\F(\mathcal{C})=r$ holds if and only if $M$ is representable over $\F$.
\end{thr}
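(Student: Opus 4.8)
The plan is to prove the two assertions separately, relying throughout on the standard description of cocircuits \cite{Ox}: a cocircuit of $M$ is exactly the complement of a hyperplane (a flat of rank $r-1$), and, when $M$ is represented by an $r\times|E|$ matrix $A$ over $\F$, the cocircuits coincide with the minimal supports of the nonzero vectors of the row space of $A$. I identify the rows of $\mathcal{C}$ with the elements of $E$ and its columns with the cocircuits $D_1,\dots,D_s$, and I write $\K_\F(\mathcal{C})$ for the rank to be estimated.

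First I would establish the bound $\K_\F(\mathcal{C})\geqslant r$, valid over every field. Fix a basis $B=\{b_1,\dots,b_r\}$ of $M$. For each $b_i$ the set $B\setminus\{b_i\}$ spans a hyperplane $H_i$ with $b_i\notin H_i$, so its complement $D^{(i)}=E\setminus H_i$ is the fundamental cocircuit of $b_i$ with respect to $B$; it contains $b_i$ but no other element of $B$. Hence the $r\times r$ submatrix of $\mathcal{C}$ with rows $b_1,\dots,b_r$ and columns $D^{(1)},\dots,D^{(r)}$ is the identity pattern. Any $C$ with the same zero pattern as $\mathcal{C}$ therefore contains a diagonal $r\times r$ submatrix with nonzero diagonal, which is nonsingular over $\F$; thus $\operatorname{rank}C\geqslant r$.

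The direction ``$M$ representable $\Rightarrow\K_\F(\mathcal{C})=r$'' is an explicit construction that needs no assumption on $\F$. Represent $M$ by $A$ with row space $W\subseteq\F^{E}$, $\dim W=r$. For each cocircuit $D_j$ choose $w_j\in W$ with support exactly $D_j$ (possible, as $D_j$ is a minimal support in $W$), and let $C$ be the matrix whose $j$th column is $w_j$. Then $C$ has the zero pattern of $\mathcal{C}$ and all its columns lie in $W$, so $\operatorname{rank}C\leqslant r$; combined with the lower bound this gives $\K_\F(\mathcal{C})=r$.

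The converse — extracting a representation from a rank-$r$ realization — is where the real work lies, and it is the step that forces $\F$ to be infinite. Given $C$ with the pattern of $\mathcal{C}$ and $\operatorname{rank}C=r$, the diagonal submatrix from the first step shows that the rows $C_{(b_1)},\dots,C_{(b_r)}$ form a basis of the row space, so each row is a unique combination $C_{(e)}=\sum_i (u_e)_i\,C_{(b_i)}$, which assigns a vector $u_e\in\F^r$ to every $e\in E$, with $u_{b_i}$ the standard basis vectors. Writing $p_j$ for the tuple of $B$-entries of the $j$th column one obtains $C_{ej}=\langle u_e,p_j\rangle$, so $e\in D_j$ is equivalent to $\langle u_e,p_j\rangle\neq0$; equivalently each hyperplane $E\setminus D_j$ of $M$ coincides with the set $\{e: u_e\in p_j^{\perp}\}$, a flat of the matroid $N$ represented by the arrangement $\{u_e\}_{e\in E}$. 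Since the $u_{b_i}$ are independent, $B$ is a basis of $N$ as well, so $M$ and $N$ share the rank $r$ and every hyperplane of $M$ is a flat of $N$. The crux is to promote this to $N=M$: one argues that the identity map is a rank-preserving weak map between $M$ and $N$, hence a matroid isomorphism, and it is precisely here that the realizing matrix (equivalently, the points $p_j$) must be chosen generically so that no accidental collinearity among the $u_e$ enlarges a flat or shrinks a support — which is possible only because $\F$ is infinite. Once $N=M$ is established, $\{u_e\}$ is the desired representation, completing the proof.
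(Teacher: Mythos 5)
This statement is imported from \cite{DSS} and the paper offers no proof of its own, so your attempt has to be judged on its own merits. Your first two steps are fine and are the standard arguments: the fundamental cocircuits $D^{(1)},\dots,D^{(r)}$ of a basis $B$ give a diagonal $r\times r$ nonzero pattern inside $\mathcal{C}$, whence $\K_\F(\mathcal{C})\geqslant r$ over every field; and when $M$ is representable, picking for each cocircuit a vector of the row space with exactly that support produces a realization of rank $r$. The reduction in the converse direction --- writing $C_{ej}=\langle u_e,p_j\rangle$ with $u_{b_i}$ the standard basis vectors, so that every hyperplane $E\setminus D_j$ of $M$ becomes the exact trace $\{e:u_e\in p_j^{\perp}\}$ of a linear hyperplane on the configuration $\{u_e\}$ --- is also correct and is the right way to set up the problem.

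The final step, however, contains a genuine gap. The principle you invoke, ``a rank-preserving weak map between matroids on the same ground set is an isomorphism,'' is false: take $M=U_{2,3}$ and let $N$ be the rank-$2$ matroid in which elements $1,2$ are parallel; every independent set of $N$ is independent in $M$ and the ranks agree, yet $M\neq N$. Worse, the appeal to genericity is logically misplaced: in this direction the realizing matrix $C$ is \emph{given} by the hypothesis $\K_\F(\mathcal{C})=r$, so you are not free to perturb the $p_j$ or the $u_e$; nothing may be ``chosen generically.'' What actually closes the argument is the information you already extracted but did not use: each hyperplane $H_j$ of $M$ equals $\{e:u_e\in p_j^{\perp}\}$, hence is a flat of $N$; since every flat of $M$ is an intersection of hyperplanes of $M$ and flats of $N$ are closed under intersection, every flat of $M$ is a flat of $N$; running a maximal chain of flats of $M$ through a given flat $F$ and using $r_N(E)=r$ forces $r_N(F)=r_M(F)$, and from there one checks $r_N(X)=r_M(X)$ for arbitrary $X$ (take a maximal $N$-independent subset $I\subseteq X$, note $I$ is $M$-independent and $\operatorname{cl}_M(I)$ is a flat of $N$ containing $\operatorname{cl}_N(I)\supseteq X$). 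This gives $N=M$ with no genericity and, notably, no use of the infinitude of $\F$ --- a sign that you should have paused to locate exactly where (if anywhere) that hypothesis enters, rather than attaching it to a step that does not exist.
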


The following well-known fact connects the notions of matroid duality and representability.

\begin{thr}\cite{Ox}\label{thrmatrdual}
If a matroid $M$ is representable over a field $\F$, then so is its dual $M^*$.
\end{thr}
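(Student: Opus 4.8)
The plan is to prove Theorem~\ref{thrmatrdual}, the statement that representability of a matroid implies representability of its dual. Since the excerpt invokes this as a "well-known fact," I would nonetheless reconstruct the standard linear-algebraic argument rather than merely cite it. The key idea is that a representation of a rank-$r$ matroid $M$ on a ground set $E$ of size $n$ over a field $\F$ can be encoded as the row space of an $r\times n$ matrix $G$ over $\F$ whose columns are the vectors assigned to the elements of $E$; the dual representation is then obtained by passing to the orthogonal complement of this row space.

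First I would set up the encoding carefully. Given that $M$ is representable over $\F$, I would fix a representing matrix $G\in\F^{r\times n}$ of rank $r$ whose columns $g_1,\dots,g_n$ realize $M$, meaning a subset $B\subseteq E$ is a basis of $M$ if and only if the corresponding columns of $G$ form a basis of $\F^r$. Up to a permutation of $E$ (which corresponds to an isomorphism of matroids and does not affect representability), I would row-reduce $G$ into the standard form $G=\bigl(I_r \mid P\bigr)$, where $I_r$ is the $r\times r$ identity and $P\in\F^{r\times(n-r)}$; this is legitimate because elementary row operations preserve the column-dependence structure and hence the matroid. The natural candidate for a representation of $M^*$ is then the $(n-r)\times n$ matrix $H=\bigl(-P^{\top}\mid I_{n-r}\bigr)$, whose row space is exactly the orthogonal complement of the row space of $G$, so that $GH^{\top}=O$.

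The crux of the argument is to verify that $H$ genuinely represents $M^*$, i.e.\ that a set $B^*\subseteq E$ of size $n-r$ indexes a basis of the column space of $H$ precisely when its complement $E\setminus B^*$ indexes a basis of $M$ (equivalently, of $G$). This is the step I expect to be the main obstacle, since it requires relating the nonvanishing of the maximal minors of $H$ to those of $G$. The clean way to carry it out is via the classical duality of minors for matrices in standard form: if $G=\bigl(I_r\mid P\bigr)$ and $H=\bigl(-P^{\top}\mid I_{n-r}\bigr)$, then for complementary column index sets $J$ and $\bar J$ the maximal minor of $G$ on $J$ equals, up to sign, the maximal minor of $H$ on $\bar J$. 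Thus one of these minors is nonzero exactly when the other is, which translates directly into the statement that $B^*$ is independent in the column matroid of $H$ if and only if $E\setminus B^*$ is independent in $M$.

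Finally I would assemble these pieces. The minor-duality identity shows that the bases of the matroid represented by $H$ are exactly the complements of the bases of $M$; by the defining property of the dual matroid, these complements are precisely the bases of $M^*$. Hence $H$ is an $\F$-representation of $M^*$, completing the proof. I would note that every matrix operation used---row reduction, transposition, and the sign-bookkeeping in the minor identity---stays within the field $\F$, so no field extension is needed and the representability is genuinely over the same $\F$, which is the point required for the Kapranov-rank application in Theorem~\ref{thrKapMatr}.
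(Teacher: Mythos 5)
Your proof is correct. The paper does not actually prove Theorem~\ref{thrmatrdual}; it simply cites Oxley and moves on, so there is no in-paper argument to compare against. What you have reconstructed is precisely the standard textbook proof from the cited reference: reduce the representing matrix to the form $\bigl(I_r \mid P\bigr)$, take $\bigl(-P^{\top}\mid I_{n-r}\bigr)$ as the candidate dual representation, and invoke the complementary-minors identity to show its column matroid has as bases exactly the complements of the bases of $M$. Your closing remark that all operations stay within $\F$ is the right thing to emphasize for this paper, since the whole point of the optimality construction is that representability is sensitive to the ground field.
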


Note that the matroid duality is an involution, that is, the condition $(M^*)^*=M$ holds.
This shows that Theorem~\ref{thrmatrdual} holds as well in the opposite direction. We also
need the classical example of a non-representable matroid, which appeared in a foundational
paper by Saunders MacLane~\cite{MacLane}.

\begin{thr}\cite[Theorem~3]{MacLane}\label{rank3repres}
Let $K$ be a finite algebraic field over the field of rational numbers. Then there exists a matroid $M$ of rank $3$ which
is representable over $K$ but over no field strictly contained in $K$.
\end{thr}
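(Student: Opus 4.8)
The plan is to realise the desired matroid as a point configuration in the projective plane and to force its coordinates to generate $K$ by means of the classical von Staudt ``algebra of throws'', as used by MacLane. Concretely, I would first invoke the primitive element theorem to write $K=\mathbb{Q}(\theta)$, where $\theta$ is algebraic over $\mathbb{Q}$ with minimal polynomial $p$ of degree $d=[K:\mathbb{Q}]$. A rank-$3$ matroid is nothing but a collection of points together with the data of which triples are collinear, and such a matroid is representable over a field $\F$ precisely when its points admit a realisation in $\mathbb{P}^2(\F)$ respecting the prescribed collinearities and non-collinearities. The goal is therefore to design a finite configuration each of whose realisations pins down a coordinate that must satisfy $p$.

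The main tool is that, once a line carries a projective frame (three points playing the roles of $0$, $1$, and $\infty$), the sum $x+y$ and the product $xy$ of the coordinates of two further points can each be constructed by a fixed pattern of auxiliary points and lines; and in any field realisation respecting that pattern the constructed point necessarily carries the coordinate $x+y$, respectively $xy$. Chaining these gadgets, I would build a configuration $\mathcal{N}$ containing a distinguished point of coordinate $c$, together with points whose coordinates are the (rational) coefficients of $p$, arranged so that the incidence relations encode the single equation $p(c)=0$. The rational anchor points and the frame are realisable over any field, so the only constraint a realisation imposes on the ground field is that it contain a root of $p$.

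With $\mathcal{N}$ in hand the conclusion is short. Over $K$ itself we may take $c=\theta$, so $\mathcal{N}$ is representable over $K$. Conversely, if $\mathcal{N}$ is representable over a field $\F$, then $\F$ contains a root $c$ of the irreducible polynomial $p$, whence $[\mathbb{Q}(c):\mathbb{Q}]=d$ and so $[\F:\mathbb{Q}]\geqslant d$. For a subfield $F\subsetneq K$ one has $[K:F]\geqslant 2$, so $[F:\mathbb{Q}]=[K:\mathbb{Q}]/[K:F]\leqslant d/2<d$; hence $\mathcal{N}$ is not representable over any proper subfield of $K$, and the matroid $M$ defined by $\mathcal{N}$ is as required.

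The hard part is the rigidity and nondegeneracy of the gadgets: I must verify that the combinatorial incidence data of each adder and multiplier force the intended algebraic identity in \emph{every} field, with no exceptional realisations, and that when the gadgets are glued no two points are accidentally forced to coincide and no unintended collinearity appears, either of which could destroy the matroid or admit spurious realisations over smaller fields. Handling these degeneracies, and checking that the resulting abstract dependence structure is genuinely a matroid realising exactly the prescribed bases, is the delicate combinatorial-geometric core of the argument; the algebraic bookkeeping of the previous paragraph is then routine.
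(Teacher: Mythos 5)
The paper does not actually prove this statement: it is imported verbatim as Theorem~3 of MacLane's 1936 paper, so there is no internal proof to compare against. What you have sketched is precisely the argument of that cited source — von Staudt's ``algebra of throws'' realised as adder and multiplier point--line configurations in $\mathbb{P}^2$, chained so that the incidence data force a coordinate to satisfy the minimal polynomial $p$ of a primitive element $\theta$ of $K=\mathbb{Q}(\theta)$. Your surrounding algebra is correct: any representation of the resulting rank-$3$ matroid over a field $\F$ forces $\F$ to contain a root of the irreducible polynomial $p$, hence $[\F:\mathbb{Q}]\geqslant d$, while every field strictly contained in $K$ has degree a proper divisor of $d$ over $\mathbb{Q}$; and the realisation over $K$ itself is obtained by setting $c=\theta$.

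The gap is that the part you explicitly defer is not a routine verification but the entire content of the theorem. Two things must actually be done. First, the adder and multiplier gadgets only function when their inputs are in general position (the multiplier degenerates when an input is $0$ or $1$, or when auxiliary points coincide), so when you chain them to evaluate $p(\theta)=a_0+a_1\theta+\dots+\theta^d$ — including building the integer coefficients by iterated addition of $1$ — you must exhibit one concrete non-degenerate realisation over $K$ (all constructed points distinct, no unintended collinearities) in order to even \emph{define} the matroid $M$ as the dependence structure of that configuration; this may require perturbing the choice of primitive element or of the frame. Second, and harder, you must show that in an \emph{arbitrary} representation of $M$ over an \emph{arbitrary} field the prescribed non-bases and bases leave the gadgets no degenerate escape, so that they are genuinely forced to compute sums and products and hence force $p(c)=0$. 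Until both are carried out, what you have is an accurate statement of MacLane's strategy together with the (correct) field-theoretic bookkeeping, rather than a proof.
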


Now we are ready to prove the theorem stating that the bound of $t-2$ is optimal in Theorem~\ref{thrmaintermrealis}.

\begin{thr}
Let $K$ be an ordered finite algebraic field over the field of rational numbers.
Then there exists a matrix $A\in K^{n\times m}$ of rank $n-3$ with the following
property: the entries of any matrix $A'\in K^{n\times m}$ which has the same rank
and sign pattern as those of $A$ generate the whole field $K$.
\end{thr}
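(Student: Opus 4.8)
The plan is to combine the matroid machinery just developed: I will take MacLane's rank-$3$ matroid, dualise it to obtain a matroid of corank $3$, and feed its cocircuit matrix into Lemma~\ref{lemKapreal}. First I would invoke Theorem~\ref{rank3repres} to fix a matroid $M$ of rank $3$ on a ground set $E$ of size $n$ that is representable over $K$ but over no field strictly contained in $K$; this is legitimate since $K$ is assumed to be an ordered finite algebraic extension of $\mathbb{Q}$. Passing to the dual, $M^{*}$ has rank $n-3$, and by Theorem~\ref{thrmatrdual} representability is inherited by the dual. Since duality is an involution, the converse holds as well, so $M^{*}$ is representable over exactly the same fields as $M$, namely over $K$ but over no proper subfield.

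Next I would form the cocircuit matrix $B=\mathcal{C}_{M^{*}}\in\{0,1\}^{n\times m}$, whose rows are indexed by $E$ and whose columns correspond to the cocircuits of $M^{*}$, equivalently the circuits of $M$. Because $K$ has characteristic zero, every subfield contains $\mathbb{Q}$ and is therefore infinite; Theorem~\ref{thrKapMatr} then yields $\K_{K}(B)=n-3$ (as $M^{*}$ is representable over $K$), while $\K_{R_2}(B)>n-3$ for every field $R_2$ strictly contained in $K$ (as $M^{*}$ fails to be representable there). These are precisely the hypotheses of Lemma~\ref{lemKapreal} with $R_1=K$ and $r=n-3$, so the lemma supplies a matrix $A\in K^{n\times m}$ of rank $n-3$ whose sign pattern $S$ is not realizable by any rank-$(n-3)$ matrix over a proper subfield of $K$. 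To finish, given any $A'\in K^{n\times m}$ of rank $n-3$ with $\S(A')=S$, I would let $R_2$ be the subfield generated by the entries of $A'$; then $A'$ realizes $S$ over $R_2$ at rank $n-3$, forcing $R_2=K$ by the previous sentence, which is exactly the stated conclusion.

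The subtle points are two. The first, conceptual rather than computational, is making sure the strict inequality $\K_{R_2}(B)>n-3$ holds for \emph{all} proper subfields simultaneously: this is where the full strength of MacLane's conclusion (non-representability over every smaller field), its transfer to the dual, and the characteristic-zero infinitude of subfields are all used together. The second point is only needed to read the result as optimality of the bound $t-2$ in Theorem~\ref{thrmaintermrealis}: one must check that the term rank of $A$ equals $n$, so that $n-3=t-3$. Since $M$ may be taken simple of rank $3$ with every element lying in abundantly many circuits, this amounts to exhibiting a system of distinct representatives assigning to each element of $E$ a distinct circuit of $M$ containing it, which a routine Hall-type count on the circuits of $M$ provides. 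I expect this term-rank verification, rather than the main chain of implications, to be the only place requiring genuine care.
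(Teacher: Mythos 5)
Your proposal follows exactly the paper's argument: MacLane's rank-$3$ matroid, dualisation to a corank-$3$ matroid via Theorem~\ref{thrmatrdual} and the involutivity of duality, the cocircuit matrix fed through Theorem~\ref{thrKapMatr}, and finally Lemma~\ref{lemKapreal}. Your added observations (that every proper subfield of $K$ contains $\mathbb{Q}$ and is hence infinite, which is needed to invoke the second half of Theorem~\ref{thrKapMatr}, and the explicit unpacking of ``entries generate $K$'' via the subfield they generate) are correct refinements of steps the paper leaves implicit, so the two proofs are essentially identical.
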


\begin{proof}
Using Theorem~\ref{rank3repres}, we get a rank-three matroid $M$ representable over $K$ but
not over any field strictly contained in $K$. Denoting the number of its vertices by $n$,
we see that by definitions the dual $M^*$ has rank $n-3$. Then we use Theorem~\ref{thrmatrdual}
to conclude that $M^*$ is representable over $K$ but not over any field strictly contained in $K$.
From Theorem~\ref{thrKapMatr} it follows that the cocircuit matrix of $M^*$, which has $n$ rows,
has also Kapranov rank $n-3$ with respect to $K$ and greater Kapranov rank with respect to any
field strictly contained in $K$. Application of Lemma~\ref{lemKapreal} now completes the proof.
\end{proof}

\end{document}